\newtheorem{thm}{Theorem}
\newtheorem{theorem}[thm]{Theorem}
\newtheorem{proposition}[thm]{Proposition}
\theoremstyle{definition}
\newtheorem{remark}[thm]{Remark}
\newcommand{\CPb}{\overline{\mathbb{CP}}{}^{2}}
\newcommand{\CP}{{\mathbb{CP}}{}^{2}}
\newcommand{\RP}{{\mathbb{RP}}{}^{3}}
\newcommand{\Z}{\mathbb{Z}}
\newcommand{\N}{\mathbb{N}}
\def \x {\times}
\def \eu{{\text{e}}}
\newcommand{\nc}{\newcommand}
\nc{\dmo}{\DeclareMathOperator}
\newtheorem*{mthm}{Main Theorem}
\newtheorem*{mthm2}{Second Main Theorem}
\dmo{\MCG}{Mod}
\dmo{\Diff}{Diff}
\begin{document}

\title[Virtual Betti numbers of symplectic fibered $4$-manifolds]
{Virtual Betti numbers and the symplectic Kodaira dimension of fibered $4$-manifolds}

\author[R. \.{I}. Baykur]{R. \.{I}nan\c{c} Baykur}
\address{Max Planck Institute for Mathematics, Bonn, Germany \newline
\indent Department of Mathematics, Brandeis University, Waltham MA, USA}
\email{baykur@mpim-bonn.mpg.de, baykur@brandeis.edu}

\begin{abstract}
We prove that if a closed oriented $4$-manifold $X$ fibers over a \linebreak $2$- or $3$-dimensional manifold, in most cases all of its virtual Betti numbers are infinite. In turn, we show that a closed oriented $4$-manifold $X$ which is not a tower of torus bundles and fibering over a $2$- or $3$-dimensional manifold does not admit a torsion symplectic canonical class, nor is of Kodaira dimension zero. 
\end{abstract}

\maketitle

\setcounter{secnumdepth}{2}
\setcounter{section}{0}

\section{Introduction} 

Let $X$ be a $4$-manifold which is closed, smooth, and oriented. For any $i \neq 0, 4$, the \emph{virtual Betti number} $b_i$ of $X$, denoted by $vb_i(X)$, is defined as the supremum taken over the set of $i$-th Betti numbers of finite covers of $X$, so it takes values in $\N \cup \{+\infty \}$. We define the virtual $b_2^+$ and $b_2^-$ of $X$ similarly. On the other hand, $X$ is said to be \emph{fibered} if it admits a surjective submersion onto a $k$-dimensional manifold, which therefore defines a fiber bundle over the target manifold with fibers of dimension $4-k$ by the celebrated theorem of Ehresmann, where we assume that $k$ is non-zero. We will often encode a fibration in the form $A \hookrightarrow X \stackrel{f}{\rightarrow} B$, specifying the base $B$, the fiber $A$, and the fibration map $f$ on $X$.

Our first theorem shows that virtual Betti numbers of fibered $4$-manifolds demonstrate a similar behavior to that of lower dimensional manifolds in many cases:

\begin{mthm}
Let $X$ be a closed smooth oriented $4$-manifold. If $X$ is 
\begin{itemize}
\item an $S^1$ bundle over a $3$-manifold $N$ which is a nontrivial connected sum of non-spherical $3$-manifolds, or is an irreducible $3$-manifold not covered by a torus bundle, or 
\item a genus $g$ surface bundle over a genus $h$ surface with $gh \neq 0$ or $1$,
\end{itemize}
then $vb_1(X), vb_2(X), vb_2^+(X), vb_2^-(X), vb_3(X)$ are all $+\infty$. 
\end{mthm}

\noindent \footnote{Upon sharing a draft of this article, Stefan Friedl kindly informed us that Stefano Vidussi and himself have a preprint in preparation which contains an independent proof of $vb_1= + \infty$ for surface bundles over surfaces with the obvious exceptions.}We shall note the case of $S^1$ bundles over $3$-manifolds follows rather easily using the virtual properties of $3$-manifold groups (see Proposition~\ref{virtual3} below). One can in fact extend the first case so as to cover all connected sums of $3$-manifolds but $\RP \# \RP$, leading the desired result, though for the main purposes of this note (namely proving that the virtual Betti numbers of surface bundles are infinite and the Second Main Theorem that follows) we will be content with the statement as it is. All the other hypotheses on the $4$-manifold $X$ in the statement of the theorem are necessary: When $X$ is an $S^1$ bundle over a $3$-manifold covered by a torus bundle, a $T^2$ bundle over $T^2$ (when $gh=1$), or a ruled surface over a surface (when $gh=0$), a finite index subgroup of $\pi_1(X)$ is solvable with derived series of length at most four, and so is any finite index subgroup $H$ of it, which implies that the abelianization of $H$ has rank $\leq 4$. If we remove the assumption on fiberedness, we can for instance take any simply-connected symplectic $4$-manifold $X$ which will therefore have $vb_1(X)=0$. Finally, if we remove either one of the extra assumption for the fibrations, $X=S^1 \x L(p,1)$, which fibers over both $S^2$ and $L(p,1)$, and has $\pi_1(X)= \Z \oplus \Z_p$, is an example with $vb_1(X) =1 $. 

Although a lot is known on the topology of symplectic $4$-manifolds with torsion canonical class \cite{Bauer, Li, Li1, MS}, there are very few known examples: with the exception of the $\text{K3}$ surface, all known examples fiber over $T^2$, or over $T^2$ bundles over $S^1$, and thus all are towers of torus bundles (of any dimension between $1$ and $4$). One might therefore think that fibered $4$-manifolds constitute a good pool of candidates to fish for more examples. We will show however, this is not the case:

\begin{mthm2}
A closed symplectic $4$-manifold $X$ with torsion canonical class does not fiber over a $2$- or $3$-dimensional manifold, unless it is a tower of torus bundles. 
\end{mthm2}

\noindent In the case of $X$ fibering over a $3$-manifold, the theorem is due to Friedl\,and\,Vidussi \cite{FV}. We will reproduce their result taking a slightly different path, as discussed below. 

Two different fields of mathematics will get engaged in our proof of this theorem, through the following two beautiful theorems obtained via Gauge theory and geometric group theory, respectively:

\begin{theorem}[Bauer \cite{Bauer}, Tian-Jun Li \cite{Li1}] \label{thm1}
If the canonical class $K_{\omega}$ of the symplectic $4$-manifold $(X, \omega)$ is torsion, then $b_1(X) \leq 4$. 
\end{theorem}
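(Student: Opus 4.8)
The plan is to turn the torsion hypothesis into a purely numerical identity among the Betti numbers, so that bounding $b_1$ reduces to bounding $b_2^+$, and then to extract the bound on $b_2^+$ from Seiberg--Witten theory. First I would invoke the standard almost complex identity $c_1^2[X] = 2\chi(X) + 3\sigma(X)$, valid for any closed almost complex $4$-manifold, applied to $(X,\omega)$ with $c_1 = -K_\omega$. Since $K_\omega$ is torsion we have $K_\omega^2 = 0$, hence $2\chi(X) + 3\sigma(X) = 0$. Substituting $\chi(X) = 2 - 2b_1 + b_2^+ + b_2^-$ and $\sigma(X) = b_2^+ - b_2^-$ and simplifying yields the single relation
\begin{equation*}
b_2^-(X) = 5\,b_2^+(X) - 4\,b_1(X) + 4.
\end{equation*}
Because $b_2^- \geq 0$, this already gives $b_1(X) \leq 1 + \tfrac{5}{4}\,b_2^+(X)$, so it suffices to prove the absolute bound $b_2^+(X) \leq 3$; with that in hand one gets $b_1(X) \leq 1 + 15/4 < 5$, that is $b_1(X) \leq 4$.

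The entire weight of the theorem therefore falls on showing $b_2^+(X) \leq 3$, and this is where gauge theory enters and where I expect the \textbf{main obstacle} to lie. The crucial input is Taubes' nonvanishing theorem: for a symplectic $4$-manifold with $b_2^+ > 1$ the canonical $\mathrm{Spin}^c$ structure (whose first Chern class is $-K_\omega$), together with its conjugate, has $\SW_X = \pm 1$, so in particular $\SW_X \not\equiv 0$. Since $K_\omega$ is torsion the relevant $c_1$ is torsion as well, placing us in the situation closest to the spin case, where Furuta-type $10/8$ constraints become available. The route I would take is Bauer's refinement of the monopole invariant to a stable cohomotopy (Bauer--Furuta) class: its nonvanishing, forced by Taubes, combined with the $\mathrm{Pin}(2)$- or $S^1$-equivariant topology of the monopole map and the vanishing of $c_1^2$, produces a Furuta-type inequality whose net effect is precisely $b_2^+(X) \leq 3$. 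Alternatively one can argue more classically, as Li does, by first reducing to the minimal case—torsion $K_\omega$ forces $K_\omega \cdot E = 0$ for any class $E$, so $X$ admits no symplectic $(-1)$-sphere and is minimal—and then using that such an $X$ has Seiberg--Witten simple type, with every basic class $c$ obeying $c^2 = 2\chi + 3\sigma = 0$; the resulting set of basic classes is then rigid enough to force $b_2^+$ down to at most $3$.

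Finally I would dispose of the excluded range $b_2^+ = 1$, for which the Seiberg--Witten invariant is only defined relative to a chamber and Taubes' statement must be read through the wall-crossing formula. Here no gauge theory is in fact needed: the displayed relation specializes to $b_2^-(X) = 9 - 4\,b_1(X)$, and $b_2^- \geq 0$ forces $b_1(X) \leq 2$ outright. Combining this with the bound $b_2^+(X) \leq 3$ obtained in the case $b_2^+ > 1$ yields $b_1(X) \leq 4$ in all cases. I would close by observing that the estimate is sharp and that the arithmetic pins down the extremal homology type: $b_1 = 4$ forces $b_2^+ = 3$ and $b_2^- = 3$, exactly matching the $4$-torus, while the smaller admissible values of $b_2^+$ recover the rational homology of the Enriques, bi-elliptic, and $\mathrm{K3}$ examples.
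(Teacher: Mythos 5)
First, a structural note: the paper does not prove this statement at all. Theorem~\ref{thm1} is imported from Bauer \cite{Bauer} and Li \cite{Li1} and used as a black box, so there is no in-paper argument to compare yours against. Judged on its own terms, your numerical reduction is correct and is exactly the right skeleton: the almost complex identity $c_1^2 = 2\chi + 3\sigma$ together with $K_\omega^2 = 0$ gives $b_2^-(X) = 5b_2^+(X) - 4b_1(X) + 4$, hence $b_1(X) \leq 1 + \tfrac{5}{4}\,b_2^+(X)$, and the theorem collapses onto the single claim $b_2^+(X) \leq 3$. (The same identity appears in the paper's ``Finishing the proof'' paragraph in rearranged form.) Your arithmetic disposal of the small-$b_2^+$ cases, the minimality observation via adjunction, and the sharpness discussion at the end are all fine.

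The genuine gap is that the inequality $b_2^+(X) \leq 3$ --- which, as you yourself note, carries the entire weight of the theorem --- is asserted rather than proved. ``A Furuta-type inequality whose net effect is precisely $b_2^+ \leq 3$'' is a description of the conclusion of Bauer's paper, not an argument: the $\mathrm{Pin}(2)$-equivariance of the monopole map is available only for honestly spin structures, and extending the $10/8$-type constraint to the case where $c_1$ is merely torsion, while extracting the sharp constant $3$, is the hard technical content of \cite{Bauer}. Your ``classical'' alternative is not viable as stated either: the relevant basic classes here are $\pm K_\omega$, which are torsion and therefore square to zero automatically, so the condition $c^2 = 2\chi + 3\sigma = 0$ on basic classes places no constraint whatsoever on $b_2^+$; Li's actual route in \cite{Li1} goes through constructing quaternionic structures on certain bundles, not through rigidity of the basic class set. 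In short, the proposal correctly locates where the theorem lives and gets all the surrounding arithmetic right, but the essential gauge-theoretic step is missing.
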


\begin{theorem}[Agol \cite{Agol}, Kojima \cite{Ko}, Luecke \cite{Lu}] \label{thm2}
If $N$ is a closed orientable irreducible $3$-manifold which is not a graph manifold, then it admits finite covers with arbitrarily large first Betti numbers. 
\end{theorem}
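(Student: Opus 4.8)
The plan is to split $N$ along its geometric decomposition, isolate a hyperbolic piece, and manufacture the first homology there. \emph{Step 1: produce a hyperbolic piece.} Since $N$ is closed, orientable, and irreducible but not a graph manifold, geometrization together with the JSJ decomposition cuts $N$ along a canonical (possibly empty) family of incompressible tori into Seifert fibered and finite-volume hyperbolic pieces; at least one piece must be hyperbolic, for otherwise every piece would be Seifert fibered and $N$ would be a graph manifold (here closed $\mathrm{Sol}$-manifolds are counted among graph manifolds, so they are excluded by hypothesis, consistent with their virtual $b_1$ being bounded by $3$). This leaves two cases: either the torus family is empty and $N$ is itself closed hyperbolic, or it is nonempty and $N$ contains an essential torus bounding a cusped hyperbolic piece $M$.

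\emph{Step 2: the toroidal case.} For any compact orientable $3$-manifold $M$ whose boundary is a union of $t$ tori, the half-lives-half-dies principle forces $b_1(M) \geq t$, since the image of $H_1(\partial M; \mathbb{Q}) \to H_1(M; \mathbb{Q})$ has dimension $\tfrac{1}{2}\, b_1(\partial M) = t$. It therefore suffices to find finite covers $\tilde N \to N$ in which the hyperbolic piece lifts to a cover with arbitrarily many boundary tori; using separability of the peripheral $\mathbb{Z}^2$-subgroups one can indeed force a chosen JSJ torus to have arbitrarily many preimages. The substance of the theorems of Luecke \cite{Lu} and Kojima \cite{Ko} is that such covers of the individual pieces can be assembled compatibly along the JSJ tori into honest finite covers of the closed manifold $N$, and that the homology created inside the hyperbolic piece is not annihilated by the gluing, so that $b_1(\tilde N) \to \infty$.

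\emph{Step 3: the closed hyperbolic case.} This is the deepest ingredient and is due to Agol \cite{Agol}: by his resolution of the virtual Haken and virtual fibering conjectures (building on Kahn--Markovic and Wise), $\pi_1(N)$ is virtually compact special, hence virtually RFRS; a closed hyperbolic $3$-manifold group with this property admits finite covers with arbitrarily large first Betti number.

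\emph{Main obstacle.} The genuine difficulty sits in Step 3: extracting any virtual first homology at all from a closed hyperbolic $3$-manifold is precisely the content of the virtual Haken conjecture, and unbounded growth requires the full cubulation and RFRS machinery. In the toroidal case the cusped hyperbolic piece supplies homology cheaply through the boundary-torus count, so the difficulty there is instead organizational --- realizing the prescribed covers simultaneously on all pieces along the JSJ tori (a subgroup-separability issue) and checking, via Mayer--Vietoris, that the boundary homology survives into $H_1(\tilde N)$.
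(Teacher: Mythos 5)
Your proposal is correct and matches the paper's approach exactly: the paper imports this statement as a black box with precisely the three citations you use, giving no independent proof, and your case division (closed hyperbolic case via Agol \cite{Agol}, toroidal non-graph case via Kojima \cite{Ko} and Luecke \cite{Lu}, organized by geometrization and the JSJ decomposition) is the intended assembly of those references. The only nuance worth noting is in your Step 3, where unbounded virtual $b_1$ for a closed hyperbolic $N$ follows most cleanly from largeness of virtually (compact) special groups rather than from the RFRS property alone, which by itself yields virtual fibering and hence only virtual $b_1 \geq 1$.
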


The existence of the finite coverings with arbitrarily large Betti numbers promised in the Main Theorem will be established via ``dimensional reductions'' prescribed by the fibering, where the Theorem~\ref{thm2} will play a key role. Almost in all cases, the arguments can be presented in terms of group theoretic properties of $\pi_1(X)$ of a fibered $4$-manifold $X$, which is an extension of the fundamental group of the base by that of the fiber. Nevertheless, we will describe the coverings explicitly, so as to provide more geometric insight to the situation. 

The Main Theorem will then imply the Second Main Theorem: Assume that $X$ is an $S^1$ bundle over a $3$-manifold $N$ with euler class $e$. If $e$ is torsion, $H_1(N)$ contains a non-trivial torsion subgroup, which we can realize as the image of a normal subgroup $H$ of $\pi_1(N)$ under the abelianization map. Let $\tilde{N}$ be the covering of $N$ associated to $H$. It is then easy to see that the pull-back bundle $S^1 \hookrightarrow \tilde{X} \stackrel{\tilde{f}}{\rightarrow} \tilde{N}$ has trivial Euler class, so $\tilde{X} = S^1 \x \tilde{N}$. Since $\tilde{X}$ is a finite cover of a symplectic $4$-manifold $X$, it is symplectic itself. Thus, we can assume that $X$ is an $S^1$ bundle over $N$ with non-torsion euler class $e$ to begin with. However, as shown in \cite{McCarthy, Bowden}, the existence of a symplectic form on $X$ then implies that $N$ is either $S^1 \x S^2$ or irreducible. The former would force $X$ to have a non-torsion canonical class, so we can take $N$ to be irreducible. Here $b_1(N)= b_2^+(X) + 1 \geq 2$ by the symplecticity of $X$, so $N$ can be covered by a torus bundle only if it is a tower of torus bundles \cite{Scott}, which is not possible by our assumption on $X$. On the other hand, if $X$ is a genus $g$ surface bundle over a genus $h$ surface, then $gh \neq 1$ by our assumption on the topology of $X$, and $gh \neq 0$, since otherwise we would have a ruled symplectic surface which always has non-torsion symplectic class. Hence, no matter if $X$ fibers over a $2$- or $3$-dimensional manifold, the conditions of the Main Theorem are satisfied, so we have $vb_1(X) = +\infty$. For any finite cover $\tilde{X}$ of a symplectic $4$-manifold $X$ with torsion canonical class, the pull-back symplectic form on $\tilde{X}$ will also have torsion canonical class. However by Theorem~\ref{thm1}, this is impossible, once $b_1(\tilde{X}) > 4$, completing the proof of the Second Main Theorem. 

The organization of our paper is as follows: We first prove the Main Theorem in the two cases; when $X$ is an $S^1$-bundle over a $3$-manifold or a surface bundle over a surface. We will then make a short digression into the minimality of symplectic fibered $4$-manifolds. Lastly, we will show that the statement of our Second Main Theorem holds verbatim for symplectic $4$-manifolds with Kodaira dimension zero, and discuss how to shorten our proof and improve our results in this particular case.

\vspace{0.1in}
\noindent \textit{Acknowledgements.} We would like to thank Neil Hoffman and Koji Fujiwara for helpful discussions.
We also thank Stefan Friedl and Josef Dorfmeister for very helpful comments on an earlier version of this paper. The author was partially supported by the NSF grant DMS-0906912.

\vspace{0.2in}
\section{Proofs} 

All manifolds that appear below are assumed to be closed, smooth, and oriented, whereas the maps are smooth. 

\vspace{0.1in}
\subsection{The proof of the Main Theorem.} \

Our proof will be given in two respective cases marked by the dimensions of the target of the fibration.

\vspace{0.1in}
\noindent{\textbf{Fibering over a $3$-manifold.}} 

A complete treatment of this case when the base $3$-manifold is irreducible, directed towards proving the corresponding statement in the Second Main Theorem, is already present in \cite{FV}. We will give a short proof using Theorem~\ref{thm2}, which immediately implies the authors' main result thanks to the strong constraints of Theorem~\ref{thm1}. This result will also be needed to handle the case of surface bundles over surfaces below.

\begin{proposition} \label{virtual3}
Let $X$ be an $S^1$ bundle over a $3$-manifold $N$. If $vb_1(N)= +\infty$, then $vb_1(X)= +\infty$.
\end{proposition}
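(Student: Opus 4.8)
The plan is to transport the finite covers of $N$ that witness $vb_1(N)=+\infty$ up to $X$ by pulling back the circle bundle, and then to bound the first Betti number of the total space from below by that of the base. Write the bundle as $S^1 \hookrightarrow X \stackrel{f}{\rightarrow} N$. Given any finite cover $p \colon \tilde{N} \rightarrow N$ of degree $d$, I would form the pull-back bundle $\tilde{X} = \{(x,\tilde{n}) \in X \x \tilde{N} : f(x) = p(\tilde{n})\}$, which is again an $S^1$ bundle $\tilde{f} \colon \tilde{X} \rightarrow \tilde{N}$ and whose projection $\tilde{X} \rightarrow X$ is a $d$-fold covering. Thus every finite cover of $N$ produces a finite cover of $X$ of the same degree.

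The core step is the inequality $b_1(\tilde{X}) \geq b_1(\tilde{N})$. The cleanest route is the homotopy exact sequence of the fibration: since the fiber $S^1$ is connected, the induced map $\tilde{f}_* \colon \pi_1(\tilde{X}) \rightarrow \pi_1(\tilde{N})$ is surjective, hence so is the abelianized map $H_1(\tilde{X};\Z) \rightarrow H_1(\tilde{N};\Z)$, which forces $\text{rank}\, H_1(\tilde{X}) \geq \text{rank}\, H_1(\tilde{N})$, i.e. $b_1(\tilde{X}) \geq b_1(\tilde{N})$. Alternatively, and more precisely, the Gysin sequence of the oriented circle bundle yields a short exact sequence $0 \rightarrow H^1(\tilde{N};\mathbb{Q}) \stackrel{\tilde{f}^*}{\rightarrow} H^1(\tilde{X};\mathbb{Q}) \rightarrow \ker(\cup\, e_{\tilde{N}} \colon H^0(\tilde{N};\mathbb{Q}) \rightarrow H^2(\tilde{N};\mathbb{Q})) \rightarrow 0$, where $e_{\tilde{N}}$ is the pulled-back Euler class; this gives $b_1(\tilde{X}) = b_1(\tilde{N})$ when $e_{\tilde{N}}$ is non-torsion and $b_1(\tilde{X}) = b_1(\tilde{N})+1$ when it is torsion, so in either case $b_1(\tilde{X}) \geq b_1(\tilde{N})$.

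To finish, I would use the hypothesis $vb_1(N)=+\infty$ to choose, for each $M$, a finite cover $\tilde{N} \rightarrow N$ with $b_1(\tilde{N}) > M$; the associated cover $\tilde{X} \rightarrow X$ then satisfies $b_1(\tilde{X}) \geq b_1(\tilde{N}) > M$, whence $vb_1(X) = +\infty$.

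I do not expect a serious obstacle here: once the pull-back construction is set up correctly, the only point demanding care is the direction of the Betti-number inequality, which is exactly the surjectivity of $\tilde{f}_*$ on fundamental groups, equivalently the injectivity of $\tilde{f}^*$ on rational $H^1$. The Euler class being torsion versus non-torsion changes the exact value of $b_1(\tilde{X})$ but is immaterial for the lower bound we need.
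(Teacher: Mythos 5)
Your proposal is correct and follows the same route as the paper: pull back the circle bundle over each finite cover of $N$ to get a finite cover of $X$, then use $b_1(\tilde{X}) \geq b_1(\tilde{N})$. The paper simply asserts this inequality, while you justify it (correctly) via the surjection $\pi_1(\tilde{X}) \twoheadrightarrow \pi_1(\tilde{N})$ and, more precisely, the Gysin sequence.
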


\begin{proof}
Assume that $vb_1(N) = + \infty$. Corresponding to each covering $\tilde{N}$ of $N$ we have a covering $\tilde{X}$ of $X$, which is an $S^1$ bundle over $\tilde{N}$. Hence $b_1(\tilde{X}) \geq b_1(\tilde{N})$, and $vb_1(N) = +\infty$ implies $vb_1(X) = +\infty$.
\end{proof}

Let us now show that Proposition~\ref{virtual3} holds under the assumptions we have made in the Main Theorem. First assume that $N$ is an irreducible $3$-manifold. We claim that $N$, being an irreducible $3$-manifold not covered by a torus bundle implies that it always admits a finite cover $\tilde{N}$ with arbitrarily large $b_1$: First suppose that $N$ is a graph manifold. Then, with the above assumptions in hand, the work of Kojima in \cite{Ko} implies that $N$ can be covered by a $3$-manifold with arbitrarily large $b_1$. On the other hand, if we suppose that $N$ is not a graph manifold, we can then employ Theorem~\ref{thm2} to obtain the desired covers of $N$. 

Now if $N$ is a connected sum of non-spherical $3$-manifolds $N_1$ and $N_2$ (and possibly some others), there is a finite cover $N'$ of $N$ with summands $N'_i$ covering $N_i$ and having $b_1(N'_i) \geq 1$ for each $i=1,2$. For any $m \geq 1$, we can take a degree $m$ cover of say $N'_1$ to produce a degree $m$ covering $\tilde{N}$ of $N'$ with $m$ $N_2$ summands so that $b_1(\tilde{N}) \geq m$. Since $\tilde{N}$ is also a finite covering of $N$, we are done.

\vspace{0.1in}
\noindent{\textbf{Fibering over a surface.}}

Now let $F \hookrightarrow X \stackrel{f}{\rightarrow} B$ be a surface bundle over a surface with $F \cong \Sigma_g$ and $B \cong \Sigma_h$, where at least one of $g$ or $h$ is greater than one, by our assumptions on the topology of $X$.\footnote{The assumption on the symplecticity of $X$ is almost intrinsic here; unless it is a $T^2$ bundle over a $\Sigma_h$, $X$ can always be equipped with a symplectic form via the Thurston construction. We do not however assume that the given form is necessarily of Thurston type, i.e. compatible with the fibration.}

For what follows, let us first recall that $H_1(X)$ surjects onto $H_1(B)$, so $b_1(X) \geq 2h$. If $h \geq 2$, for any $n \geq 1$, we can take an $n$-fold covering $\tilde{B}$ of the base $B$ and construct a pull-back bundle $\tilde{X}$. Then $b_1(\tilde{X}) \geq b_1(\tilde{B}) = 1 + n(h -1)$, which is strictly increasing in $n$. 

So we are down to one case: $g \geq 2$ and $h=1$. Note that $X$ admits a Thurston symplectic form, and so does any covering of it. Here $X$ is uniquely determined by the monodromy homomorphism $\mu: \pi_1(T^2) \to \MCG(F)$, the mapping class group of $F \cong \Sigma_g$. In turn, $\mu$ is determined by a commuting pair of elements $A, B$ in $\MCG(F)$, the images of a pair of generators of $a,b$ for $\pi_1(T^2) \cong \Z^2$ under $\mu$. What matters for our analysis (as it will become evident shortly) is the type of these two elements $A$ and $B$; whether they are periodic, reducible, or pseudo-Anosov. For a description of these mapping class types and the basic facts we invoke below, the reader can turn to \cite{Ivanov}. 

If either one of $A$ or $B$ is periodic, say $A$, we can pass to a pull-back cover which is now an $F$ bundle over $T^2$ with monodromy $\tilde{\mu}$ prescribed by $A=1$ and $B$. Choosing identity as the representative for the first mapping class element, we can see that this cover is diffeomorphic to $S^1$ times an $F$ bundle over $S^1$, and so is an $S^1$ bundle over a $3$-manifold. Hence, without loss of generality, we can assume that neither $A$ nor $B$ is periodic (and not identity in particular) to begin with.

Since a reducible element does not commute with a pseudo-Anosov element, we are left with two possibilities: Either $A$ and $B$ are both pseudo-Anosovs or both reducible. 

In the former case, it is known that the two elements then have a common root in $\MCG(F)$, so $A= \eta ^m$, $B = \eta ^n$ for some non-zero $m, n$. If we take the finite index subgroup of $\pi_1(T^2)$ generated by the elements $ma - nb$ and $b$, the pull-back bundle corresponding to it has monodromy (factoring through $\mu$) prescribed by the pair of elements $A'=\eta^{mn-nm}=1$ and $B'=\eta^m$. So we once again have $S^1$ times an $F$ bundle over $S^1$. 

In the latter case, we can pass to a pull-back cover if necessary so that $A$ and $B$ are both pure, i.e. they fix a disjoint, non-empty collection of curves on the nose. Two pure reducible elements in $\MCG(F)$ commute only if there is at least one curve $\alpha$ on $F$ they both fix. 

Assume for now that $\alpha$ is non-separating. There is a finite covering $p\colon \tilde{F} \to F$ which lifts $\alpha$ to $n$ disjoint loops $\alpha_1, \ldots, \alpha_n$ representing distinct primitive homology classes which can be completed to a basis for $H_1(\tilde{F})$, where $n$ can be taken arbitrarily large. As shown by Morita \cite[Lemma~4.1]{Morita}, after passing to a pull-back cover (which in our case will be again over $T^2$), one can fiberwise cover the latter by a surface bundle $\tilde{F} \hookrightarrow \tilde{X} \stackrel{\tilde{f}}{\rightarrow} \tilde{B}$, where the restriction to fibers are the prescribed covering $p\colon \tilde{F} \to F$. (This can be regarded as a ``characteristic cover'' construction.) Since the monodromy of the pull-back bundle downstairs still fix $\alpha$, the one upstairs fix the disjoint union of $\alpha_1, \ldots, \alpha_n$. If needed, we can pass to a pull-back bundle $(\tilde{\tilde{X}}, \tilde{\tilde{f}})$ of $(\tilde{X}, \tilde{f})$ which fix each $\alpha_i$, and therefore fix an $n$-dimensional subspace of $H_1(\tilde{F})$ spanned by the primitive homology classes $[\alpha_i]$, for $i=1, \ldots, n$. Finally, if $\alpha$ was separating, we could first take a covering $\tilde{F} \to F$ so that the lift of $\alpha$ would be non-separating, and apply the above array of arguments. 

Now $H^0(B, H_1(F))= H^0(\pi_1(B), H_1(F)) = (H_1(F))^{\pi_1(B)}$, which by Poincar\'e duality is equal to $H_2(B, H_1(F))$. Here $(H_1(F))^{\pi_1(B)}$ denotes the stabilizer subgroup of $H_1(F)$ under the monodromy action of $\pi_1(B)$. On the other hand, from the Leray-Serre spectral sequence of a fibration, we get an exact sequence
\[ \ldots \to H_3(X) \to H_2(B, H_1(F)) \to H_0(B, H_2(F)) \to \ldots \]
The kernel of the map $H_2(B, H_1(F)) \to H_0(B, H_2(F))$ is contained in the $\pi_1(B)$ invariant subspace of $H_1(F)$, which we have shown above to be of dimension at least $n$. However, $H_0(B, H_2(F)) \cong \Z$, so we see that the rank of $H^1(X) \cong H_3(X)$ is at least $n-1$, which implies that $b_1(X) \geq n-1$. Taking $n$ arbitrarily large gives the desired covers of $X$. 

Hence we see that in all these cases, $vb_1(X) = +\infty$.

\vspace{0.1in}
\noindent{\textbf{Finishing the proof.}} \

We have now proved our claim for the $vb_1(X)$. By the Poincar\'e duality \linebreak $vb_3(X) = + \infty$ as well. For the remaining Betti numbers, recall that the Euler characteristic and the signature are multiplicative under coverings. So, if we have $\eu(X)=0=\sigma(X)$, then $vb_1(X)= +\infty$ implies $vb_2(X), vb_2^+(X), vb_2^-(X)$ are all $+\infty$. This condition holds in the majority of the cases: all $S^1$ bundles over $3$-manifolds, and surface bundles with base or fiber $T^2$ (as well as all $3$-manifold bundles over $S^1$) have vanishing Euler characteristic and signature. For surface bundles with fiber and base genera at least two, if $X_m$ is a covering of $X$ obtained via an $m$-fold cover of the base, then from the Euler characteristic and the signature formulae for the covers we deduce that 
\[ b_2^+(X_m)= m (b_2^+(X)-1)+1 \, , \ \ \text{and} \ \ b_2^-(X_m)= m(b_2^-(X)-1)+1 \, .\]
So both $b_2^+(X_m)$ and $b_2^-(X_m)$ are strictly increasing in $m$, provided $b_2^+(X), b_2^-(X)$ are greater than one. Since $X$ admits a symplectic form in this case, and is minimal (see the next subsection), we have $c_1^2(X, \omega) \geq 0$, yielding
\[ 0 \leq 2 \eu(X)+ 3 \sigma(X) = 4 - 4b_1(X)+ 5b_2^+(X)-b_2^-(X) , \] 
which implies 
\[ b_2^+(X) \geq \frac{4}{5}(b_1(X) - 1) \geq \frac{4}{5}(2h-1) . \]
So $b_2^+(X) \geq 3$. The same holds when we reverse the orientation, so $b_2^-(X) \geq 3$, too. Hence $vb_2^+(X),vb_2^-(X)$, and therefore $vb_2(X)$ are all $+\infty$. 

It is easy to see that in the case of ruled surfaces, i.e. when we have a surface bundle over a surface with base or fiber $S^2$, the middle dimensional Betti numbers of the coverings do not change.

\begin{remark} \label{MappingTorusCase}
Similar results can be seen to hold for a (symplectic) $4$-manifold $X$ fibering over $S^1$ in many cases. For instance, if the associated monodromy $\mu$ of $M \hookrightarrow X \stackrel{f}{\rightarrow} S^1$ is finite, we can pass to a finite cover $\tilde{X}$ of $X$ which is a product $\tilde{M} \x S^1$, where $\tilde{M}$ is a finite cover of $M$, and apply the above results. Such a condition on the monodromy is satisfied when $M$ is an irreducible $3$-manifold with only hyperbolic pieces in its JSJ decomposition \cite{BF}. However, due to the rich structure of mapping class groups of $3$-manifolds, settling these questions for mapping tori of $3$-manifolds appears to be a more subtle task when the fibers are reducible.
\end{remark}

\subsection{Minimality of symplectic fibered $4$-manifolds} \

Let $X$ be a any symplectic $4$-manifold. If it is an $S^1$-bundle over a $3$-manifold $N$ with non-torsion Euler class $e$, $X$ is either aspherical or equal to $S^2 \x T^2$, following a slight modification of the argument in \cite{McCarthy} given in \cite{Bowden}. If $e$ is torsion, as discussed in the introduction, we can pass to a covering $\tilde{X}$ of $X$ which is an $S^1$ bundle over $\tilde{N}$ with trivial Euler class, where $\tilde{N}$ is a finite cover of $N$. So the above argument applies --- since $\pi_2(X)=\pi_2(\tilde{X})$. 

If $X$ is a genus $g$ surface bundle over a genus $h$ surface, the same holds, provided $g, h \geq 1$, which can be seen from the homotopy long exact sequence of the fibration. When $gh=0$, $X$ is a ruled surface, so non-minimal only if it is $S^2 \tilde{\x} S^2 \cong \CP \# \CPb$. 

Now if $X$ fibers over $S^1$, it is not aspherical unless the fiber $M$ is. Nevertheless, we can take a closer look at $H_2(X)$ via the exact sequence
\[ 1 \to \text{Coker}(f_* - 1)|_{H_2(M)} \to H_2(X) \to \text{Ker}(f_* - 1)|_{H_1(M)} \to 1 \]
which is derived from the Mayer-Vietoris sequence obtained using a splitting of the base circle into two intervals. We see that all the non-trivial classes in $H_2(X)$ are generated by the inclusions of the classes in $H_2(X)$, which are either in the cokernel of $(f_* - 1)|_{H_2(M)}$ or are dual to circles that are in the kernel of $(f_* - 1)|_{H_1(M)}$. So every class in $H_2(X)$ has even self-intersection. We conclude that

\begin{proposition}
If a fibered $4$-manifold $X$ admits a symplectic form, then it is minimal, unless $X=S^2 \tilde{\x} S^2$.
\end{proposition}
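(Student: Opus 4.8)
The plan is to reduce the assertion to an elementary (co)homological dichotomy and then dispose of each fibration type in turn. The key observation is that a symplectically embedded sphere $E$ of self-intersection $-1$ is necessarily homologically essential, since $E \cdot E = -1 \neq 0$ forces $[E] \neq 0$, and it has \emph{odd} self-intersection. Consequently it suffices, in each case, to verify one of the following two conditions: (i) $\pi_2(X) = 0$, so that $X$ admits no homotopically essential sphere at all; or (ii) the intersection form of $X$ is even, so that no class has self-intersection $-1$. Either one precludes a symplectic exceptional sphere and hence yields minimality, so that no appeal to the deeper gauge-theoretic equivalence between smooth and symplectic minimality is actually needed.

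I would then run through the three fibration types. When $X$ is an $S^1$-bundle over a $3$-manifold $N$, the dichotomy recalled above shows that for non-torsion Euler class $X$ is either aspherical, giving (i), or equal to $S^2 \x T^2$; in the latter case $H_2$ is spanned by $[S^2]$ and $[T^2]$ (the K\"unneth cross terms vanishing as $H_1(S^2)=0$), both of self-intersection $0$, so the form is even and (ii) holds. For torsion Euler class I would pass to the finite cover with trivial Euler class and use $\pi_2(X)=\pi_2(\tilde{X})$. When $X$ is a $\Sigma_g$-bundle over $\Sigma_h$ with $g,h \geq 1$, the homotopy long exact sequence gives $\pi_2(X)=0$ because $\pi_2(\Sigma_g)=\pi_2(\Sigma_h)=0$, so (i) applies; when $gh=0$ we are in the ruled case, which is minimal unless $X \cong S^2 \tilde{\x} S^2 \cong \CP \# \CPb$, the asserted exception. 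Finally, when $X$ fibers over $S^1$ with $3$-manifold fiber $M$, I would read off generators of $H_2(X)$ from the Mayer-Vietoris sequence displayed above: the classes coming from $\text{Coker}(f_*-1)|_{H_2(M)}$ are represented by $2$-cycles lying inside a single fiber copy of $M$, hence can be pushed off themselves along the $S^1$-direction and have self-intersection $0$, while the classes coming from $\text{Ker}(f_*-1)|_{H_1(M)}$ are represented by tori swept out by monodromy-invariant loops, again of self-intersection $0$; thus every diagonal entry of the form is even, the form is even, and (ii) holds.

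The routine reduction of the first paragraph makes the case analysis the real content, and I expect the main obstacle to be the mapping-torus case: one must exhibit honest geometric representatives for a full set of generators of $H_2(X)$ and check that each has self-intersection $0$, which is what certifies the evenness of the form there. A secondary subtlety is the torsion-Euler-class bookkeeping for $S^1$-bundles, where one must ensure that passing to a finite cover still controls $\pi_2(X)$ (and the form of $X$) rather than only that of the cover, together with the precise identification of $S^2 \tilde{\x} S^2$ as the \emph{unique} non-minimal ruled exception, the other sphere bundles over surfaces remaining minimal.
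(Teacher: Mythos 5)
Your proposal follows the paper's proof essentially verbatim: the same three-way case division (circle bundles via the McCarthy--Bowden asphericity dichotomy together with the $\pi_2(X)=\pi_2(\tilde{X})$ reduction for torsion Euler class, surface bundles via the homotopy exact sequence and the classification of non-minimal ruled surfaces, mapping tori via evenness of the intersection form read off from the Mayer--Vietoris generators of $H_2(X)$). The only cosmetic differences are that you make the underlying dichotomy ($\pi_2=0$ or even intersection form) explicit and verify $S^2\times T^2$ by a direct evenness computation rather than as a minimal ruled surface.
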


\subsection{Symplectic $4$-manifolds of Kodaira dimension zero.} \

Recall that a symplectic $4$-manifold $(X, \omega)$ is said to have \emph{Kodaira dimension zero}, if $[\omega] \cdot K = K \cdot K = 0$. It turns out that for minimal symplectic $4$-manifolds, these are precisely the ones with torsion symplectic canonical class \cite{Li}. So our Second Main Theorem could be rephrased for symplectic $4$-manifolds of Kodaira dimension zero, for we have seen above that symplectic fibered $4$-manifolds are minimal, except when $X=S^2 \tilde{\x} S^2$ which, along with any ruled surface, does not admit a torsion symplectic canonical class. 

If aimed to obtain this result on Kodaira dimension zero symplectic $4$-manifolds alone, only one step in our proof above could be bypassed, essentially by invoking a more advanced result. While dealing with surface bundles, we had to make a case by case analysis for commuting pairs of mapping class group elements, so as to handle genus $g \geq 2$ bundles over $T^2$. As shown by Tian-Jun Li, the Kodaira dimension of a minimal symplectic $4$-manifold is an oriented diffeomorphism invariant \cite[Theorem 2.4]{Li}. So we can run the argument for a Thurston symplectic form instead to arrive at the desired conclusion, since the canonical class of this form is non-torsion, as easily seen from the adjunction.

Also note that the proof for surface bundles can be easily extended over Lefschetz fibrations over positive genera. When $F \hookrightarrow X \stackrel{f}{\rightarrow} B$ is a genus $g$ Lefschetz fibration over a genus $h \geq 1$ surface, $H_1(X)$ still surjects onto $H_1(B)$. For $h \geq 2$, one can construct pull-back bundles as above to argue that $vb_1(X) = +\infty$. If $h=1$, since $b_1(X) > 0$, once again $\sigma(X)=0$, as observed both in \cite{Bauer} and \cite{Li}. On the other hand, the Euler characteristic calculation gives
\[ b_2^+(X) = 2(g-1)(h-1) + \frac{n}{2} + b_1(X) - 1 , \]
where $n$ is the number of critical points. So $n=0$ and we indeed have a surface bundle over a surface as before.

\begin{remark}
A similar study for surface bundles and Lefschetz fibrations is carried out in \cite{DZ}, where the authors treat the trickiest case of surface bundles over $T^2$ by invoking the subadditivity of Kodaira dimensions result from \cite{Li5}. Although apparent when the symplectic form on $X$ is compatible with the fibration, it is not immediately clear why the subadditivity works for arbitrary symplectic forms. However, the desired result follows as above by invoking Li's theorem in \cite{Li} and running the argument for a Thurston (or Thurston-Gompf in the case of a Lefschetz fibration) symplectic form instead. 
\end{remark}

\enlargethispage{1.5cm}
\begin{remark} \label{useofsymp}
Now that we have completed the proof of the Second Main Theorem, let us highlight at which points in the proof we have used the assumption that $X$ was symplectic. Whenever we reduced our analysis to the case of an $S^1$ bundle over a $3$-manifold $N$, the symplecticity assumption was invoked to show that $N$ was irreducible. A close look at McCarthy's proof in \cite{McCarthy} of this fact would reveal that this condition can be replaced by the condition that all finite covers of $X$ have non-trivial Seiberg-Witten invariants, which is certainly true when $X$ is symplectic. For surface bundles, the symplecticity assumption was rather intrinsic, since except for $T^2$ bundles over $\Sigma_h$ with $h \geq 2$, the total space $X$ of any such bundle admits a symplectic structure. In summary, it is plausible that one can replace the assumption on symplecticity by the non-vanishing of Seiberg-Witten invariants of all covers of $X$, which  \textit{a priori} is a property attained by a broader class of $4$-manifolds.
\end{remark}


\end{document}